\documentclass[12pt]{amsart}
\usepackage{amsmath,amsfonts,amsthm,amssymb,mathrsfs,
amsxtra,amscd,latexsym}
\usepackage[hmargin=2cm,vmargin=3cm]{geometry}
\parskip = 3pt

 \newtheorem{thm}{Theorem}[section]

 \newtheorem{lem}[thm]{Lemma}
 \newtheorem{prop}[thm]{Proposition}
 
 \newtheorem{dfn}[thm]{Definition}

 \newtheorem{ex}[thm]{Example}
 \theoremstyle{definition}
 \theoremstyle{remark}
 \numberwithin{equation}{section}

\newcommand{\sm}{\left(\begin{smallmatrix}}
\newcommand{\esm}{\end{smallmatrix}\right)}
\newcommand{\mat}{\left(\begin{matrix}}
\newcommand{\emat}{\end{matrix}\right)}
\renewcommand{\(}{\left(}
\renewcommand{\)}{\right)}

\newcommand{\mb}{\mathbb}

\newcommand{\mc}{\mathcal}

\def\CC{\mathbb{C}}
\def\HH{\mathbb{H}}
\def\NN{\mathbb{N}}

\def\ZZ{\mathbb{Z}}

\def\SL{\mathrm{SL}}

\begin{document}

\title{Eichler integrals and harmonic weak Maass forms}


 \author{Dohoon Choi}
 \author{Byungchan Kim}
\author{Subong Lim}

 \address{School of liberal arts and sciences, Korea Aerospace University, 200-1, Hwajeon-dong, Goyang, Gyeonggi 412-791, Republic of Korea}
  \email{choija@kau.ac.kr}	
  
\address{School of Liberal Arts and Institute of Convergence Fundamental Studies, Seoul National University of Science and Technology, 172 Gongreung 2 dong, Nowongu, Seoul, 139-743, Republic of Korea}
\email{bkim4@seoultech.ac.kr}

 \address{School of Mathematics, Korea Institute for Advanced Study, Hoegiro 85, Dongdaemun-gu, Seoul 130-722, Republic of Korea}
  \email{subong@kias.re.kr}

\subjclass[2010]{Primary 11F37, Secondary 32N10}

\thanks{Keywords:  Eichler integral, harmonic weak Maass form, period polynomial, period function}

 \thanks{
Dohoon Choi was supported by Basic Science Research Program through the National Research Foundation of Korea (NRF) funded by the Ministry of Education, Science and Technology (NRF2010-0022180).
Byungchan Kim was supported by Basic Science Research Program through the National Research Foundation of Korea (NRF) funded by the Ministry of Education, Science and Technology (NRF2011-0009199).}

\begin{abstract}
 Recently, K. Bringmann, P. Guerzhoy, Z. Kent and K. Ono studied the connection between Eichler integrals and the holomorphic parts of harmonic weak Maass forms on the full modular group.
 In this article, we extend  their result to
 more general  groups, namely, $H$-groups by employing the  theory of supplementary functions introduced and developed 
 by M. I. Knopp and S. Y. Husseini. In particular, we show that the set of 
 Eichler integrals, which have polynomial period functions, is
 the same as the set of holomorphic parts of harmonic weak Maass
 forms of which the non-holomorphic parts are certain period integrals
 of cusp forms. From this we deduce relations among period functions for harmonic weak Maass forms.
\end{abstract}

\maketitle

\section{Introduction} \label{section1}
In a seminal paper \cite{Eichler}, M. Eichler introduced what is now called an Eichler integral for the cusp form $f(z)$ of weight  $k\in2\NN$, which is essentially
\begin{equation} \label{Eichlercusp1}
\int_{z}^{ i \infty} f(\tau) (z-\tau)^{k-2} d \tau.
\end{equation}
This simply defined integral is deeply related to many  areas of mathematics. In particular, G. Shimura \cite{Shimura} and Y. Manin \cite{Manin} showed that these are connected with the  special values of $L$-functions.  On the other hand,
 a number of authors have studied the arithmetic
 properties of harmonic weak Maass forms. Especially, the holomorphic parts of harmonic weak Maass forms encode many beautiful mathematical objects. In particular, S. Zwegers \cite{Zwegers}, K. Bringmann and K. Ono \cite{BO}, and D. Zagier \cite{Zagier} revealed the deep relations between harmonic weak Maass forms and Ramanujan's mock theta functions. For detailed implications and applications, we recommend Ono's nice survey paper \cite{Onosurvey}.

 In this light, it is natural to ask whether we can think of  Eichler integrals for
other modular forms.  Since \eqref{Eichlercusp1} does not converge for non-cusp forms, one has to find another way to define the Eichler integral. In a recent paper \cite{BGK}, K. Bringmann,  P. Guerzhoy, Z. Kent and K. Ono
 encountered this difficulty for modular forms on the full modular group, where they defined
 the Eichler integral as a formal power series. Their  definition is motivated by the following well known fact: for the cusp form $f(z) = \sum_{n=1}^{\infty} a(n) q^n$, the integral \eqref{Eichlercusp1} is essentially
\begin{equation}\label{Eichlercusp2}
\sum_{n=1}^{\infty} a(n) n^{1-k} q^n ,
\end{equation}
where $q = \exp(2\pi i z)$, $\rm{Im}\; z >0$. Once we 
 connect Eichler integrals and the holomorphic parts of harmonic weak Maass forms
 on the full modular group, it is natural to seek a further extension to more general  groups. This is the goal of our paper. The main ideas of \cite{BGK} employ Bol's identity and a functional equation for the associated $L$-series. These ideas beautifully manifested a connection between the period polynomial and the obstruction to modularity, but it is hard to find the contribution from the constant term of  the Eisenstein series as they remarked. To resolve this issue, we use the theory of supplementary functions
 introduced by M. I. Knopp \cite{Kon} in 1962  and developed by M. I. Knopp and S. Y. Husseini \cite{Hus} to study Eichler integrals and Eichler cohomology for Fuchsian groups. 
This idea  enables us not only to obtain a desirable generalization, but also to obtain more precise information for the contribution from the constant terms of harmonic weak Maass forms to period polynomials. Moreover, from this point of view, we can show that the set of  Eichler integrals, which have polynomial period functions, is
 the same as the set of holomorphic parts of harmonic weak Maass
 forms of which the non-holomorphic parts are certain period integrals
 of cusp forms.  Zagier called these cusp forms  {\it shadows} of mock modular forms (for the definitions, consult \cite{Zagier}), and actually the supplementary functions are closely related with the shadows. In this sense, our approach is based on the role of shadows to the theory of harmonic weak Maass forms.

Let $\Gamma$ be an $H$-group, i.e., a finitely generated Fuchsian group of the first kind, which has at least one parabolic class.
 This implies that $\Gamma$ satisfies the following properties (see \cite{Kon2} or \cite{Leh}):
\begin{enumerate}
\item $\Gamma$ is finitely generated,
\item $\Gamma$ is discrete, but discontinuous at no point of the real line,
\item $\Gamma$ contains translations. 
\end{enumerate}
\noindent Let $k\in\ZZ$ and $\chi$ be a (unitary) character of $\Gamma$. A harmonic weak Maass form of weight $k$ and character $\chi$ on $\Gamma$ is a smooth function on the upper half plane with possible singularities at cusps that transforms like a modular form of weight $k$ and character $\chi$ on $\Gamma$ and is annihilated by the weight $k$ hyperbolic Laplacian
\[\Delta_k := -y^2\(\frac{\partial^2}{\partial x^2} + \frac{\partial^2}{\partial y^2}\) + iky\(\frac{\partial}{\partial x} + i\frac{\partial}{\partial y}\).\]
We denote by $H_{k,\chi}(\Gamma)$ the space of harmonic weak Maass forms of weight $k$ and character $\chi$ on $\Gamma$  (for the precise definition of harmonic weak Maass forms see Section \ref{section2.2}). 

For the differential operator $\xi_{2-k}(f)(z) := 2iy^{2-k}(\overline{\frac{\partial f}{\partial\bar{z}}})(z)$,  the assignment $f(z)\mapsto \xi_{2-k}(f)(z)$ gives an anti-linear mapping
\[\xi_{2-k}: H_{2-k,\chi}(\Gamma) \to M^!_{k,\bar{\chi}}(\Gamma),\]
where $M^!_{k,\bar{\chi}}(\Gamma)$ is the space of weakly holomorphic modular forms of weight $k$ and character $\bar{\chi}$ on $\Gamma$  (for the precise definition of weakly holomorphic modular forms see Section \ref{section2.1}). Let $H^*_{2-k,\chi}(\Gamma)$ be the inverse image of the space of cusp forms $S_{k,\bar{\chi}}(\Gamma)$ under the mapping $\xi_{2-k}$. Any harmonic weak Maass form $f(z)\in H^*_{2-k,\chi}(\Gamma)$ has a unique decomposition $f(z) = f^+(z) + f^-(z)$, where the function $f^{+}(z)$ (resp. $f^{-}(z)$)  is called the {\it{holomorphic}} (resp. {\it{non-holomorphic}}) part of $f(z)$. We denote   the space of holomorphic parts of $f(z)\in H^*_{2-k,\chi}(\Gamma)$ by $H^+_{2-k,\chi}(\Gamma)$.

On the other hand, a function $F(z)$ on $\HH$ is called an $\it{Eichler\ integral}$ of weight $2-k$ and character $\chi$ on $\Gamma$ if it satisfies
\[ ((D^{k-1}F)|_{k,\chi}\gamma)(z) = (D^{k-1}F)(z)\]
for all $\gamma=\sm a&b\\c&d\esm\in\Gamma$, where $(F|_{k,\chi}\gamma)(z) := \bar{\chi}(\gamma)(cz+d)^{-k}F(\gamma z)$ and  $D = \frac{1}{2\pi i}\frac{\partial}{\partial z}$. We let $E_{2-k,\chi}(\Gamma)$ denote the space of holomorphic Eichler integrals $F(z)$ of weight $2-k$ and character $\chi$ on $\Gamma$ such that
\begin{enumerate}
\item $F(z)$ is invariant under $|_{2-k,\chi}\gamma$ for all  translation matrices of the form $\sm 1& l\\ 0&1\esm \in\Gamma$,
\item $(D^{k-1}F)(z)$ can be written as
\begin{equation} \label{dfnofeichler}
(D^{k-1}F)(z) = g^*(z) + (D^{k-1}G)(z),
\end{equation}
where $g^*(z)$ is a supplementary function to a cusp form  $g(z) \in S_{k,\bar{\chi}}(\Gamma)$ and $G(z)\in M^!_{2-k,\chi}(\Gamma)$ (for the definition of a supplementary function, see Section \ref{section3.2}).
\end{enumerate}

Our first result shows that we can understand holomorphic parts of harmonic weak Maass forms in terms of Eichler integrals. Throughout the  paper, unless stated otherwise, 
we always assume that $k>2$ and $-I\in\Gamma$.

\begin{thm} \label{main1}  Let $H^+_{2-k,\chi}(\Gamma)$ and $E_{2-k,\chi}(\Gamma)$ be as above. Then 
\begin{equation*}
E_{2-k,\chi}(\Gamma)=
\begin{cases}
H^+_{2-k,\chi}(\Gamma) & \text{if $\kappa\neq0$},\\
H^+_{2-k,\chi}(\Gamma) + \CC & \text{if $\kappa=0$},
\end{cases}
\end{equation*}
where $\kappa\in [0,1)$ is an explicit constant  depending on $\chi$ and $\Gamma$ (see Section \ref{section2.1} for the precise definition).
\end{thm}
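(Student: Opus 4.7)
I prove the two inclusions separately. The main tools are Bol's identity
\[ D^{k-1}(F|_{2-k,\chi}\gamma) = (D^{k-1}F)|_{k,\bar{\chi}}\gamma \]
and the defining property of the supplementary function $g^{\ast}$ of a cusp form $g \in S_{k,\bar{\chi}}(\Gamma)$ (Section \ref{section3.2}): any antiderivative of $g^{\ast}$ under $D^{k-1}$ has parabolic cocycle equal to the period polynomial of $g$. The same period polynomial measures the failure of the standard non-holomorphic Eichler integral of $g$ to be modular of weight $2-k$, so these two objects can be glued into a modular combination. This cocycle matching is the hinge of the whole argument.

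For the inclusion $H^+_{2-k,\chi}(\Gamma) \subseteq E_{2-k,\chi}(\Gamma)$, I take $f = f^+ + f^- \in H^{\ast}_{2-k,\chi}(\Gamma)$ with shadow $g := \xi_{2-k}(f) \in S_{k,\bar{\chi}}(\Gamma)$, so that $f^-$ is, up to normalization, the standard non-holomorphic Eichler integral of $g$. Bol's identity applied to the modularity equation for $f$ shows that $D^{k-1}f$ is modular of weight $k$; the same argument applied to $f^-$ (together with the fact that $D^{k-1}$ annihilates polynomials of degree $\leq k-2$) shows $D^{k-1}f^-$ is modular of weight $k$ as well, whence so is $D^{k-1}f^+$. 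Identifying $D^{k-1}f^+ - g^{\ast}$ with $D^{k-1}G$ for some $G \in M^!_{2-k,\chi}(\Gamma)$, by comparing Fourier expansions at each cusp, gives condition (2). Condition (1) is automatic because $f^+$ and $f^-$ transform under each parabolic stabilizer by the same phase $e^{2\pi i\kappa}$, so $f^+ = f - f^-$ inherits parabolic invariance from $f$.

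For the reverse inclusion, given $F \in E_{2-k,\chi}(\Gamma)$ with $D^{k-1}F = g^{\ast} + D^{k-1}G$, I define
\[ f(z) := F(z) - G(z) + R_g(z), \]
where $R_g$ is the non-holomorphic Eichler integral of $g$ normalized so that $\xi_{2-k}(R_g) = g$; then $\xi_{2-k}(f) = g \in S_{k,\bar{\chi}}(\Gamma)$. Modularity of $f$ reduces via Bol's identity to the cocycle identity
\[ (F - G)|_{2-k,\chi}\gamma - (F - G) = R_g - R_g|_{2-k,\chi}\gamma \quad \text{for all } \gamma \in \Gamma. \]
Both sides are polynomials of degree at most $k-2$: the right-hand side is (up to sign) the period polynomial of $g$ at $\gamma$ by a direct residue computation on the integral defining $R_g$, and the left-hand side, since $D^{k-1}(F - G) = g^{\ast}$, is the same polynomial by the defining property of the supplementary function. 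Condition (1) on $F$, together with the standard growth of $R_g$ and the modularity of $G$ at cusps, then puts $f$ in $H^{\ast}_{2-k,\chi}(\Gamma)$, and $f^+ = F - G$ yields $F = f^+ + G \in H^+_{2-k,\chi}(\Gamma)$.

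The $+\CC$ correction when $\kappa = 0$ arises because a nonzero constant satisfies the parabolic invariance in (1) if and only if the parabolic generator at $i\infty$ acts trivially via $\bar{\chi}$, i.e.\ $\kappa = 0$; conversely, the shadow of a constant is zero, so no nonzero constant can appear as $f^+$ of any $f \in H^{\ast}_{2-k,\chi}(\Gamma)$ with cuspidal shadow. The main technical obstacle I anticipate is the cocycle matching in the reverse inclusion: the supplementary function theory guarantees that the cocycle of $F - G$ has the right \emph{form}, but verifying that it \emph{equals}---with the correct sign and normalization on every generator of $\Gamma$---the cocycle of $R_g$ requires careful bookkeeping of the constants built into Knopp's construction and uses the hypotheses $k > 2$ and $-I \in \Gamma$ in an essential way.
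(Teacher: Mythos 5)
Your overall architecture --- pair the supplementary function $g^*$ of the shadow $g$ with the non-holomorphic Eichler integral of $g$ and glue them by matching cocycles --- is the same as the paper's, and you correctly identify the cocycle matching as the hinge. But in both inclusions you stop short of actually deploying it where it is needed, and this leaves genuine gaps.

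In the forward inclusion, ``identifying $D^{k-1}f^+ - g^*$ with $D^{k-1}G$ \dots\ by comparing Fourier expansions at each cusp'' does not work: the image of $D^{k-1}$ on $M^!_{2-k,\chi}(\Gamma)$ is a proper subspace of the weight-$k$ weakly holomorphic forms, cut out by period obstructions, so matching Fourier expansions only produces a formal antiderivative at each cusp, with no reason for these local expansions to assemble into one modular object. The correct argument takes the specific antiderivative $\mc{E}^H_{g^*}$ of $g^*$ and proves that $G := f^+ - \mc{E}^H_{g^*}$ is modular; that is exactly the identity $r^H(g^*,\gamma;z) = [r(g,\gamma;\bar z)]^- = r^N(g,\gamma;z)$ of Propositions \ref{computationperiod} and \ref{suppleperiod}, i.e.\ the cocycle matching you deferred entirely to the reverse direction. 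In the reverse inclusion, the cocycle identity you propose to verify, $(F-G)|_{2-k,\chi}\gamma - (F-G) = R_g - R_g|_{2-k,\chi}\gamma$, is false in general: $F - G$ is only \emph{some} $(k-1)$-fold antiderivative of $g^*$, so it differs from $\mc{E}^H_{g^*}$ by a polynomial $p$ of degree $\le k-2$, and the identity fails by the coboundary of $p$. The paper avoids this by first building the harmonic form $\mc{E}^H_{g^*} - \mc{E}^N_g + G$ and then observing that $F - (\mc{E}^H_{g^*} + G)$ is annihilated by $D^{k-1}$, hence is a polynomial, which condition (1) (invariance under parabolic elements, applied to $T$) forces to be a constant, and forces to vanish unless $\kappa = 0$. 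This is where the $+\,\CC$ in the $\kappa = 0$ case actually comes from; as written, your reverse inclusion would yield $F = f^+ + G$ with no constant, contradicting the $\kappa=0$ case of the statement. Finally, your justification that no nonzero constant lies in $H^+_{2-k,\chi}(\Gamma)$ (``the shadow of a constant is zero'') is a non sequitur: the shadow of $f$ is computed from $f^-$, not from $f^+$, so this says nothing about whether a constant can occur as the holomorphic part of some $f\in H^*_{2-k,\chi}(\Gamma)$.
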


Now we   turn to period functions. A form $f(z)\in M^!_{k,\chi}(\Gamma)$ is a {\it{weakly holomorphic cusp form}}  if its constant term vanishes at every cusp of $\Gamma$. Let $S_{k,\chi}^!(\Gamma)$ denote the space of weakly holomorphic cusp forms.
For $f(z) = \sum_{n\gg-\infty}a_ne^{2\pi i(n+\kappa)z/\lambda}\in S^!_{k,\chi}(\Gamma)$, its {\it{Eichler integral}} is
\[\mc{E}_f(z) := \sum_{n\gg-\infty\atop  n+\kappa\neq0} a_n \biggl(\frac{n+\kappa}{\lambda}\biggr)^{-(k-1)}e^{2\pi i(n+\kappa)z/\lambda}.\]
We define the {\it{period function}} for $f(z)$ and $\gamma\in\Gamma$ by
\[r(f,\gamma;z) := c_{k}   (\mc{E}_f - \mc{E}_f|_{2-k,\chi}\gamma)(z),\]
where $c_k := -\frac{(k-2)!}{(2\pi i)^{k-1}}$.

On the other hand,  following \cite{BGK}, for each $\gamma = \sm a&b\\c&d\esm\in \Gamma$ and $\mc{F}^+(z)\in H^+_{2-k,\chi}(\Gamma)$, we define the {\it{$\gamma$-mock modular period function}} for  $\mc{F}^+(z)$ by
\[\mb{P}(\mc{F}^+,\gamma;z) := \frac{(4\pi)^{k-1}}{(k-2)!}(\mc{F}^+ - \mc{F}^+|_{2-k,\chi}\gamma)(z).\]
The following theorem is a generalization of \cite[Theorem 1.1]{BGK} showing that $\mb{P}(\mc{F}^+,\gamma;z)$ is related with the period function $r(\xi_{2-k}\mc{F},\gamma;z)$, and its coefficients encode critical values of $L$-functions associated to $f(z):= \xi_{2-k}\mc{F}(z)$.

\begin{thm} \label{main2} Let $\mc{F}(z)\in H^*_{2-k,\chi}(\Gamma)$ and $f(z) = \xi_{2-k}(\mc{F})(z) \in S_{k,\bar{\chi}}(\Gamma)$. Then
\[ [\mb{P}(\mc{F}^+,\gamma; \bar{z})]^- = \frac{1}{c_{k}} r(f,\gamma;z),\]
where $[\ ]^-$ indicates the complex conjugate of the function inside $[\ ]^-$.
   Moreover, if $c\neq 0$, then 
\[ \biggl[\mb{P}\biggl(\mc{F}^+,\gamma_{c,d}; \bar{z}-\frac dc\biggr)\biggr]^- = \sum_{j=0}^{k-2}\frac{L(f,\zeta_{c\lambda}^{-d},j+1)}{(k-2-j)!}(2\pi iz)^{k-2-j},\]
where $\gamma_{c,d}\in \Gamma$ is any matrix satisfying $\gamma_{c,d} = \sm *&*\\c&d\esm$. 
\end{thm}

Here, $L(f, \zeta^{-d}_{c\lambda}, s)$ is the twisted $L$-function of a cusp form $f(z)$. More precisely, let $f(z)$ be a cusp form in $S_{k,\chi}(\Gamma)$ with the Fourier expansion $f(z) = \sum_{n+\kappa>0} a_n e^{2\pi i(n+\kappa)z/\lambda}$.
Then for integers $c$ and $d$ with $(c,d)=1$ and $s\in\CC$
we consider the  series
\[L(f,\zeta^{-d}_{c\lambda}, s) = \sum_{n\in\ZZ\atop n+\kappa>0} \frac{a_n \zeta^{-d(n+\kappa)}_{c\lambda}}{((n+\kappa)/\lambda)^s}.\]
This series converges if $\Re(s)$ is sufficiently large. 
For such $s$  this series is the same as
\[\frac{(2\pi)^s}{\Gamma(s)}\int^{\infty}_0 f\left(iy-\frac dc\right)y^s \frac{dy}{y}.\]
But this integral gives an entire function, and hence we can have the analytic continuation of the series $L(f, \zeta^{-d}_{c\lambda}, s)$ on $\CC$. Throughout this paper, we consider $L(f, \zeta^{-d}_{c\lambda}, s)$ as its analytic continuation.

Furthermore, for $\mc{F}(z)\in H^*_{2-k,\chi}(\Gamma)$ and $\gamma\in\Gamma$, there are two natural periods $r(\xi_{2-k}(\mc{F}),\gamma;z)$ and $r(D^{k-1}(\mc{F}),\gamma;z)$, and they satisfy the following relation.

\begin{thm} \label{main3} If $\mc{F}(z)\in H^*_{2-k,\chi}(\Gamma)$ and $\gamma=\sm a&b\\c&d\esm\in\Gamma$, then 
\begin{align*}
r(\xi_{2-k}(\mc{F}),\gamma;z) =& \frac{(-4\pi)^{k-1}}{(k-2)!}\biggl\{[r(D^{k-1}(\mc{F}),\gamma;\bar{z})]^- +
\delta_{\kappa,0}\overline{c_{D^{k-1}(\mc{F})}c_k}\biggl(1-c^{k-2}\chi(\gamma)\biggl(z+\frac dc\biggr)^{k-2}\biggr)\biggr\},
\end{align*}
where $c_{D^{k-1}(\mc{F})}$ is an explicit constant depending on $\mc{F}(z)$ (see (\ref{constantformula}) for the precise definition).
 Moreover, if $\lambda=1$ and  $\Gamma$ is a subgroup of finite index of  the full modular group, then there is a function $\hat{\mc{F}}(z)\in H^*_{2-k,\chi}(\Gamma)$ for which $\xi_{2-k}(\hat{\mc{F}}) = \xi_{2-k}(\mc{F})$ and
\[r(\xi_{2-k}(\hat{\mc{F}}),\gamma;z) = (-1)^{k-1}\frac{(4\pi)^{k-1}}{(k-2)!}[r(D^{k-1}(\hat{\mc{F}}),\gamma;\bar{z})]^-.\]
\end{thm}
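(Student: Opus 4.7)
The plan is to apply Theorem~\ref{main2} to rewrite $r(\xi_{2-k}(\mathcal{F}),\gamma;z)$ as a cocycle of $\mathcal{F}^+$ under the $|_{2-k,\chi}$-action, and then independently express this same cocycle through $r(D^{k-1}(\mathcal{F}),\gamma;z)$ by exploiting how $D^{k-1}$ interacts with the decomposition $\mathcal{F}=\mathcal{F}^+ + \mathcal{F}^-$. Theorem~\ref{main2} directly gives
\[r(\xi_{2-k}(\mathcal{F}),\gamma;z) \;=\; \frac{c_k(4\pi)^{k-1}}{\Gamma(k-1)}\,\overline{(\mathcal{F}^+-\mathcal{F}^+|_{2-k,\chi}\gamma)(\bar z)},\]
reducing the problem to writing $(\mathcal{F}^+-\mathcal{F}^+|_{2-k,\chi}\gamma)$ in terms of $r(D^{k-1}(\mathcal{F}),\gamma;z)$ up to a correction.

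The key computational step is that $D^{k-1}$ annihilates each nonzero-frequency Fourier term of $\mathcal{F}^-$. Using $\Gamma(k-1,u)=(k-2)!\,e^{-u}\sum_{j=0}^{k-2}u^j/j!$ together with the identity $e^{-4\pi|n|y}e^{2\pi inz}=e^{2\pi in\bar z}$ (for the appropriate sign of $n$), a typical term $\Gamma(k-1,4\pi|n|y)\,e^{2\pi inz}$ rewrites as $(k-2)!\,e^{2\pi in\bar z}$ times a polynomial in $y$ of degree $\le k-2$; since $\partial_z$ kills $e^{2\pi in\bar z}$ and $\partial_z^{k-1}$ kills $y^j$ for $j\le k-2$, the whole term is killed by $D^{k-1}$. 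The only remaining potential contribution from $\mathcal{F}^-$ is a $c_0 y^{k-1}$-term, but a direct computation gives $\xi_{2-k}(c_0 y^{k-1})=\overline{c_0}(k-1)$, so cuspidality of $\xi_{2-k}(\mathcal{F})$ at $\infty$ forces $c_0=0$. Hence $D^{k-1}(\mathcal{F})=D^{k-1}(\mathcal{F}^+)$, and then $D^{k-1}e^{2\pi i(n+\kappa)z/\lambda}=((n+\kappa)/\lambda)^{k-1}e^{2\pi i(n+\kappa)z/\lambda}$ together with the definition of the Eichler integral yields $\mathcal{E}_{D^{k-1}(\mathcal{F})}(z)=\mathcal{F}^+(z)-\delta_{\kappa,0}\,a^+(0)$, where $a^+(0)$ denotes the constant Fourier coefficient of $\mathcal{F}^+$.

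Taking the $|_{2-k,\chi}$ cocycle of this identity and multiplying by $c_k$ gives
\[r(D^{k-1}(\mathcal{F}),\gamma;z) = c_k\bigl[(\mathcal{F}^+-\mathcal{F}^+|_{2-k,\chi}\gamma)(z)-\delta_{\kappa,0}\,a^+(0)\bigl(1-\bar\chi(\gamma)(cz+d)^{k-2}\bigr)\bigr].\]
Solving for the cocycle of $\mathcal{F}^+$, substituting into the Theorem~\ref{main2} formula, and taking complex conjugates (using $\overline{c_k}=(-1)^{k-1}c_k$ and the reality of $c,d$) produces the first displayed identity, with the paper's constant $c_{D^{k-1}(\mathcal{F})}$ from Section~\ref{section3.2} becoming identified with $a^+(0)$. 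For the \emph{moreover} part, any $G\in M^!_{2-k,\chi}(\Gamma)$ is annihilated by $\xi_{2-k}$, so $\hat{\mathcal{F}}=\mathcal{F}+G$ has the same shadow as $\mathcal{F}$ while shifting $a^+(0)$ by $G(0)$. When $\lambda=1$ and $\Gamma\subseteq\SL_2(\ZZ)$, the space $M^!_{2-k,\chi}(\Gamma)$ is rich enough (e.g.\ via weakly holomorphic Poincar\'e series or explicit quotients by powers of $\Delta$) to supply an element with prescribed constant term; choosing $G(0)=-a^+(0)$ kills the correction and yields the simpler identity.

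The main obstacle will be the careful bookkeeping of constants --- in particular, matching the Section~\ref{section3.2} definition of $c_{D^{k-1}(\mathcal{F})}$ (arising from the supplementary-function decomposition of $D^{k-1}(\mathcal{F}^+)$) with the $a^+(0)$ produced by the cocycle argument, and tracking the $(-1)^{k-1}$ factors from $\overline{(2\pi i)^{k-1}}$ through the repeated complex conjugations needed to move between the $z$- and $\bar z$-arguments.
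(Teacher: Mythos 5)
Your route is genuinely different from the paper's: you go through Theorem~\ref{main2} and a direct Fourier-side computation of the cocycle of $\mc{F}^+$, whereas the paper never touches $\mc{F}^+$ directly and instead runs everything through the decomposition $D^{k-1}(\mc{F}^+)=\frac{\Gamma(k-1)}{(-4\pi)^{k-1}}g^*+D^{k-1}G$ from Theorem~\ref{main1}, Proposition~\ref{suppleperiod}(1) for $r^H(g^*,\gamma;z)=[r^H(g,\gamma;\bar z)]^-$, and the vanishing of $r^H(D^{k-1}G,\gamma;z)$. Your intermediate steps are correct: $D^{k-1}$ does kill $\mc{F}^-$ (the $a_0^-y^{k-1}$ term is absent because $\xi_{2-k}(\mc{F})$ is cuspidal), $\mc{E}_{D^{k-1}(\mc{F})}=\mc{F}^+-\delta_{\kappa,0}a^+(0)$, the sign bookkeeping via $\overline{c_k}=(-1)^{k-1}c_k$ works out, and the reduction of the \emph{moreover} part to finding $G\in M^!_{2-k,\chi}(\Gamma)$ with nonzero constant term is exactly what the paper does.

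The genuine gap is the step you yourself flag as ``bookkeeping'': the identification $c_{D^{k-1}(\mc{F})}=a^+(0)$. This is not bookkeeping --- it is the actual content of the theorem, and it is precisely what the supplementary-function machinery is there to supply. The constant $c_{D^{k-1}(\mc{F})}$ of Section~\ref{section3.2} is the Lehner circle-method constant computed from the \emph{principal part} of $D^{k-1}(\mc{F})$; Lehner's theorem guarantees it equals the constant term only for genuine weakly holomorphic forms of negative weight, and $\mc{F}^+$ is not one. A priori the constant term of a mock modular form could receive a contribution not visible in its principal part. To close the gap you must invoke Theorem~\ref{main1}: write $\mc{F}^+=\frac{\Gamma(k-1)}{(-4\pi)^{k-1}}\mc{E}^H_{h^*}+G$ with $G\in M^!_{2-k,\chi}(\Gamma)$; then the constant term of $\mc{E}^H_{h^*}$ is $c_{h^*}$ \emph{by construction}, the constant term of $G$ equals $c_{D^{k-1}G}$ by Proposition~\ref{suppleperiod}(2), and additivity of $f\mapsto c_f$ in the principal part gives $a^+(0)=c_{D^{k-1}(\mc{F})}$. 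Without this your argument proves a correct identity but with a constant not yet known to be the one in the statement. Separately, in the \emph{moreover} part the existence of $G\in M^!_{2-k,\chi}(\Gamma)$ with nonzero constant term is asserted via ``Poincar\'e series or quotients by powers of $\Delta$''; for a general subgroup $\Gamma\subseteq\SL_2(\ZZ)$ with a nontrivial multiplier $\chi$ this is not obvious, and the paper needs Borcherds duality (Lemmas~\ref{existence} and~\ref{existence2}) to establish it.
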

 Theorem \ref{main3} is a generalization of \cite[Theorem 1.4]{BGK}. However, by employing the 
 theory of supplementary functions  developed by M. I. Knopp and S. Y. Husseini \cite{Hus, Kon}, we can obtain an exact equation rather than a congruence.

\begin{ex} Let $\Gamma = \SL_2(\ZZ)$, $S = \sm 0&-1\\1&0\esm$, and $k=3$, $4$, or $5$. We define a multiplier system $\chi_{2k}$  by $\chi_{2k}(\gamma) := \frac{\eta^{2k}(\gamma z)}{\eta^{2k}(z)(cz+d)^{k}}$ for $\gamma = \sm a&b\\c&d\esm \in \SL_2(\ZZ)$. Then  $\eta^{2k} (z)$ is the unique cusp form (up to constant multiples) in the space $S_{k,\chi_{2k}}(\Gamma)$  (for example, see \cite{Leh}). 
If $\mc{F}(z)\in H^*_{2-k,\overline{\chi_{2k}}}(\Gamma)$, then $\xi_{2-k}(\mc{F})(z) \in S_{k,\chi_{2k}}(\Gamma)$ and its period polynomial $r(D^{k-1}(\mc{F}), S;z)$ lies in the space
\[W := \{ P(z) \in V_{k-2} |\ (P + P|_{2-k,\overline{\chi_{2k}}}S)(z) = (P+P|_{2-k,\overline{\chi_{2k}}}U + P|_{2-k,\overline{\chi_{2k}}}U^2)(z) = 0\},\]
where $V_{k-2}$ is the space of polynomials of degree at most $k-2$ in $z$ and $U := \sm 1&-1\\1&0\esm$.
After some computations, we find that the period polynomial $r(D^{k-1}(\mc{F}),S;z)$ is equal to
\begin{equation*}
\begin{cases}
c_{\mc{F}}(z+ i) & \text{if $k=3$},\\
c_{\mc{F}}(z^2 - \sqrt{3}iz - 1) & \text{if $k=4$},\\
c_{\mc{F}}(z^3 + \frac{3+\sqrt{3}}{2}iz^2 - \frac{3+\sqrt{3}}{2}z - i) & \text{if $k=5$},
\end{cases}
\end{equation*}
where $c_{\mc{F}}$ is a constant depending on $\mc{F}(z)$.  More precisely, the constant $c_{\mc{F}}$ is given by 
\[c_{\mc{F}} = i\left(R. \int^{i\infty}_0 (D^{k-1}\mc{F})(z)dz\right),\]
where $R.\int$ is the regularized integral introduced in \cite[Section 2]{BFK}.
\end{ex}

 The remainder of the paper is organized as follows. In Section \ref{section2}, we study harmonic weak Maass forms and derive some properties of those under $\xi_{2-k}$ and $D^{k-1}$. In Section \ref{section3}, we recall definitions and basic facts about Eichler integrals and we construct Poincar\'e series which give supplementary functions and Eichler integrals.  In Section \ref{section4}, we conclude with proofs of Theorems \ref{main1}, \ref{main2}, and \ref{main3}.

\section{Harmonic weak Maass forms} \label{section2}
Here we briefly recall definitions and basic facts about modular forms and harmonic weak Maass forms. For details, consult  \cite{Leh, Miy} for modular forms and \cite{BF, BOR} for harmonic weak Maass forms, for example.

\subsection{Modular forms} \label{section2.1}
Let $\Gamma$ be an {\it{$H$-group}}, i.e., a finitely generated Fuchsian group of the first kind which has at least one parabolic class. Let $k\in\ZZ$ and $\chi$ a (unitary) character of $\Gamma$. We  recall the useful slash operator
\[(F|_{k,\chi}\gamma)(z) = \bar{\chi}(\gamma)(cz+d)^{-k}F(\gamma z)\]
for any function $F(z)$ and $\gamma = \sm a&b\\c&d\esm\in\Gamma$.
Let $T = \sm 1&\lambda\\ 0&1\esm,\ \lambda>0$, generate the subgroup $\Gamma_\infty$ of translations in $\Gamma$. If $F(z)$ satisfies $(F|_{k,\chi}T)(z) = F(z)$, then
\[F(z+\lambda) = \chi(T)F(z) = e^{2\pi i\kappa}F(z)\]
with $0\leq \kappa <1$. Thus, if $F(z)$ is holomorphic in $\HH$, then $F(z)$ has the Fourier expansion at $i\infty$ (actually a Laurent expansion)
\begin{equation} \label{Fourier1}
F(z) = \sum_{n= -\infty}^\infty a_ne^{[2\pi i(n+\kappa)z]/\lambda}.
\end{equation}
Suppose that in addition to $i\infty$, $\Gamma$ has $t\geq0$ inequivalent parabolic classes. Each of these classes corresponds to a cyclic subgroup of parabolic elements in $\Gamma$ leaving fixed a parabolic cusp on the boundary of $\HH$. Such a parabolic cusp lies on the real axis. Let $q_1,\ldots, q_t$ be the inequivalent parabolic cusps (other than $i\infty$) on the boundary of $\HH$ and let $\Gamma_j$ be the cyclic subgroup of $\Gamma$ fixing $q_j,\ 1\leq j\leq t$. Suppose also that
\begin{equation*}
Q_j = \sm *&*\\ c_j&d_j\esm,\ 1\leq j\leq t,
\end{equation*}
is a generator of $\Gamma_j$. For $1\leq j\leq t$, put $\chi(Q_j) = e^{2\pi i\kappa_j},\ 0\leq \kappa_j<1$. If a holomorphic function $F(z)$ satisfies $(F|_{k,\chi}Q_j)(z) = F(z)$, then $F(z)$ has the Fourier expansion at $q_j$:
\begin{equation}  \label{Fourier2}
F(z) = \biggl(\frac{-1}{\lambda_j(z-q_j)}\biggr)^{k}\sum_{n=-\infty}^\infty a_n(j)e^{[-2\pi i(n+\kappa_j)]/[\lambda_j(z-q_j)]},
\end{equation}
where $\lambda_j$ is a positive real number called the $\it{width\ of\ the\ cusp}$ $q_j$ and defined as  follows. Let $A_j = \sm 0&-1\\1&-q_j\esm$, so that $A_j$ has determinant $1$ and $A_j(q_j) = \infty$. Then $\lambda_j>0$ is chosen so that
\[A_j^{-1}\sm 1&\lambda_j\\0&1\esm A_j\]
generates $\Gamma_j$, the stabilizer of $q_j$.

We are now in a position to give the following:
\begin{dfn}
Suppose $F(z)$ is holomorphic in $\HH$ and satisfies the functional equation
\[(F|_{k,\chi}\gamma)(z) = F(z)\]
for all $\gamma\in\Gamma$.
\begin{enumerate}
\item[(1)] If $F(z)$ has only finitely many terms with $n+\kappa<0$ in (\ref{Fourier1})  and with $n+\kappa_j<0,\ 1\leq j\leq t$, in (\ref{Fourier2}), then $F(z)$ is called a weakly holomorphic modular form of weight $k$ and character $\chi$ on $\Gamma$. The set of all such weakly holomorphic modular forms is denoted by $M^!_{k,\chi}(\Gamma)$.

\item[(2)] Let $F(z)\in M^!_{k,\chi}(\Gamma)$. Suppose in addition $F(z)$ has only terms with $n+\kappa\geq0$ in (\ref{Fourier1}) and $n+\kappa_j\geq0,\ 1\leq j\leq t$, in (\ref{Fourier2}). Then $F(z)$ is called a holomorphic modular form. The set of holomorphic modular forms in $M^!_{k,\chi}(\Gamma)$ is denoted by $M_{k,\chi}(\Gamma)$.

\item[(3)] If $F(z)\in M_{k,\chi}(\Gamma)$ and has only terms with $n+\kappa>0,\ n+\kappa_j>0$ in the expansions (\ref{Fourier1}), (\ref{Fourier2}), respectively, then $F(z)$ is called a cusp form. The collection of cusp forms in $M_{k,\chi}(\Gamma)$ is denoted by $S_{k,\chi}(\Gamma)$.
\end{enumerate}
\end{dfn}

\subsection{Harmonic weak Maass forms} \label{section2.2}
 We start with the definition of harmonic weak Maass forms.

\begin{dfn} A harmonic weak Maass form of weight $k$ and character $\chi$ on $\Gamma$ is any smooth function on $\HH$ satisfying
\begin{enumerate}
\item[(1)] $(f|_{k,\chi}\gamma)(z) = f(z)$ for all $\gamma\in\Gamma$,
\item[(2)] $\Delta_k f =0$,
\item[(3)] a linear exponential growth condition in terms of $y$  at every cusp.
\end{enumerate}
We write $H_{k,\chi}(\Gamma)$ for the space of harmonic weak Maass forms of weight $k$ and character $\chi$ on $\Gamma$.
\end{dfn}

Recall that $T = \sm 1&\lambda\\ 0&1\esm,\ \lambda>0$, is a generator of $\Gamma_{\infty}$ and $\chi(T) = e^{2\pi i\kappa}$.
The transformation property (1) implies that $f(z)\in H_{k,\chi}(\Gamma)$ has the Fourier expansion
\[f(z) = \sum_{n\gg-\infty} a_n(y)e^{2\pi i(n+\kappa)x/\lambda}.\]
Since $\Delta_k f = 0$, the coefficients $a_n(y)$ satisfy the second order differential equation
\[\Delta_k a_n(y)e^{2\pi i(n+\kappa)x/\lambda}=0\]
as functions in $y$.  To describe $a_n(y)$, we consider the function
\[ H(k;w) = e^{-w}\int^\infty_{-2w}e^{-t}t^{-k}dt.\]
The integral converges for $k<1$ and can be holomorphically continued in $k$ (for $w\neq0$) in the same way as the Gamma function. If $w<0$, then $H(k;w) = e^{-w}\Gamma(1-k, -2w)$, where $\Gamma(a,x)$ denotes the incomplete Gamma function as in \cite{Abr}. We find that
\begin{equation*}
a_n(y) =
\begin{cases}
a^+_0 + a^-_0y^{1-k}, & \text{if $n+\kappa =0$},\\
a^+_ne^{-2\pi (n+\kappa)y/\lambda} + a^-_n H(k;2\pi (n+\kappa)y/\lambda), & \text{if $n+\kappa\neq0$},
\end{cases}
\end{equation*}
with complex coefficients $a^{\pm}_n$. Thus any harmonic weak Maass form $f(z)$ of weight $k$ has the unique decomposition $f(z) = f^+(z) + f^-(z)$, where
\begin{align} \label{decomposition}
f^+(z) &= \sum_{n\gg-\infty} a^+_n e^{2\pi i(n+\kappa)z/\lambda},\\
\nonumber f^-(z) &= \delta_{\kappa,0}a^-_0y^{1-k} + \sum_{n\ll \infty\atop n+\kappa\neq 0} a^-_n  H(k;2\pi (n+\kappa)y/\lambda)e^{2\pi i(n+\kappa)x/\lambda},
\end{align}
where $\delta_{\kappa,0} = 1$ if $\kappa = 0$, and $\delta_{\kappa,0} =0$ otherwise.

\subsection{Differential operators} \label{section2.3}
We introduce the Maass raising and lowering operators on non-holomorphic modular forms of weight $k$ as
\[R_k = 2i\frac{\partial}{\partial z} + ky^{-1}\;\; \text{and}\;\; L_k = -2iy^2\frac{\partial}{\partial \bar{z}}.\]
The following theorem is a summary of results on how  differential operators $\xi_{2-k}$ and $D^{k-1}$ act on the space of harmonic weak Maass forms.

\begin{thm} \cite[Proposition 3.2]{BF}, \cite[Theorem 1.2]{BOR} \label{operator}
 Let $k\in\ZZ, k>2$ and let $f(z) \in H_{2-k,\chi}(\Gamma)$.
\begin{enumerate}
\item[(1)]  The assignment $f(z) \mapsto \xi_{2-k}(f)(z) := y^{-k}\overline{L_{2-k} f(z)} = R_{k-2}y^{2-k}\overline{f(z)}$ defines an anti-linear mapping
\[\xi_{2-k} : H_{2-k,\chi}(\Gamma) \to M^!_{k,\bar{\chi}}(\Gamma).\]
Moreover, the kernel of $\xi_{2-k}$ is $M^!_{2-k,\chi}(\Gamma)$.

\item[(2)]  If we let $D:= \frac{1}{2\pi i}\frac{\partial}{\partial z}$, then $D^{k-1}$ defines a linear map
\[D^{k-1}: H_{2-k,\chi}(\Gamma) \to M^!_{k,\chi}(\Gamma).\]
\end{enumerate}
\end{thm}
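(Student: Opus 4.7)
The plan is to verify, for each of the two operators, three things: the modular transformation law with the claimed weight and character, holomorphy of the output in $z$, and the growth condition at cusps that makes the output weakly holomorphic. The kernel claim in part (1) will drop out of the holomorphy step.

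For part (1), the transformation $\xi_{2-k}(f)|_{k,\bar\chi}\gamma=\xi_{2-k}(f)$ will follow from the standard intertwining identity $L_{2-k}(f|_{2-k,\chi}\gamma)=(L_{2-k}f)|_{-k,\chi}\gamma$, verified by the chain rule, combined with the $y^{-k}$ prefactor and the complex conjugation that together produce the $\bar\chi$-equivariance. To show $\xi_{2-k}(f)$ is holomorphic, I would rewrite $\Delta_{2-k}=-4y^2\partial^2/\partial z\partial\bar z+2i(2-k)y\,\partial/\partial\bar z$ and use $\Delta_{2-k}f=0$ to extract $\partial^2 f/\partial z\partial\bar z=\frac{i(2-k)}{2y}\partial f/\partial\bar z$; plugging this into the product-rule expansion of $\partial/\partial\bar z\bigl[2iy^{2-k}\overline{\partial f/\partial\bar z}\bigr]$ (using $\partial y/\partial\bar z=i/2$), the two resulting terms exactly cancel. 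The kernel description is then immediate: $\xi_{2-k}(f)=0$ iff $\partial f/\partial\bar z=0$ iff $f\in M^!_{2-k,\chi}(\Gamma)$. Weak holomorphy at $i\infty$ comes from the Fourier decomposition $f=f^++f^-$ in (\ref{decomposition}); the operator kills $f^+$, and on each non-trivial mode $a_n^-H(2\pi(n+\kappa)y/\lambda)e^{2\pi i(n+\kappa)x/\lambda}$ of $f^-$ the ODE $H(w)+H'(w)=2e^{w}(-2w)^{k-2}$ produces a clean holomorphic exponential, plus a constant coming from the $\delta_{\kappa,0}a_0^-y^{1-k}$ term, so only finitely many negative-index modes survive. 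Weak holomorphy at the other cusps $q_j$ reduces to the case of $i\infty$ by conjugating with the scaling matrix $A_j$, using the already-established modular transformation.

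For part (2), the transformation law is Bol's identity $D^{k-1}(F|_{2-k,\chi}\gamma)=(D^{k-1}F)|_{k,\chi}\gamma$, a standard chain-rule induction valid for any smooth $F$. The crux is the holomorphy of $D^{k-1}f$. The observation I would use is that, unpacking $\xi_{2-k}(f)=y^{-k}\overline{L_{2-k}f}$ with $L_{2-k}f=-2iy^2\partial f/\partial\bar z$, one gets $\partial f/\partial\bar z=\tfrac{i}{2}y^{k-2}\,\overline{\xi_{2-k}(f)}$; therefore
\[
\frac{\partial}{\partial\bar z}(D^{k-1}f)=\frac{1}{(2\pi i)^{k-1}}\frac{\partial^{k-1}}{\partial z^{k-1}}\frac{\partial f}{\partial\bar z}=\frac{i/2}{(2\pi i)^{k-1}}\frac{\partial^{k-1}}{\partial z^{k-1}}\bigl[y^{k-2}\,\overline{\xi_{2-k}(f)}\bigr].
\]
Since $\overline{\xi_{2-k}(f)}$ is anti-holomorphic in $z$ (so killed by $\partial/\partial z$) and $y^{k-2}=((z-\bar z)/(2i))^{k-2}$ is a polynomial of degree $k-2$ in $z$ (under the paper's assumption $k>2$), the inner $\partial^{k-1}/\partial z^{k-1}$ annihilates the whole expression. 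Growth at cusps follows because $f$ has at most polynomial-times-exponential growth and $D^{k-1}$ preserves this class.

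I expect the main technical obstacle to be the mode-by-mode Fourier bookkeeping in $\xi_{2-k}(f^-)$, where the $\delta_{\kappa,0}a_0^-y^{1-k}$ piece must be identified as the constant Fourier coefficient and the remaining modes have to be handled uniformly via the $H+H'$ identity; one must also keep the character conventions straight, since on the target of $\xi_{2-k}$ one lands in $M^!_{k,\bar\chi}(\Gamma)$ while on the target of $D^{k-1}$ one lands in $M^!_{k,\chi}(\Gamma)$, so the intertwining identities need to be stated with the correct conjugation on each side. Once these cancellation identities and character conventions are set up correctly, nothing else is deep.
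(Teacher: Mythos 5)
The paper gives no proof of this theorem --- it is quoted from Bruinier--Funke \cite{BF} and Bruinier--Ono--Rhoades \cite{BOR} --- and your argument is a correct reconstruction of the standard proofs in those references: the cancellation of the two terms in $\frac{\partial}{\partial\bar z}\,\xi_{2-k}(f)$ forced by $\Delta_{2-k}f=0$, the identity $H(w)+H'(w)=2e^{w}(-2w)^{k-2}$ for the mode-by-mode Fourier computation, and the Leibniz/degree-count argument showing $\partial^{k-1}/\partial z^{k-1}$ kills $y^{k-2}\,\overline{\xi_{2-k}(f)}$. The only slip is notational: since $f$ here has weight $2-k$, the non-holomorphic constant term of \eqref{decomposition} reads $\delta_{\kappa,0}a_0^-y^{k-1}$ rather than $y^{1-k}$, and it is precisely this exponent that makes $\xi_{2-k}$ send it to the constant $(k-1)\overline{a_0^-}$.
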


We let $H^*_{2-k,\chi}(\Gamma)$ denote the inverse image of the space of cusp forms $S_{k,\bar{\chi}}(\Gamma)$ under the mapping $\xi_{2-k}$. Hence, if $f(z)\in H^*_{2-k,\chi}(\Gamma)$, then the Fourier coefficients $a^-_n$ vanish if $n+\kappa\geq0$.

\section{Eichler integrals} \label{section3}

In this section, we introduce the basic notions of  Eichler integrals and show how to construct the Eichler integrals for a given cusp form by using the supplementary function.

\subsection{Eichler integrals} \label{section3.1}

A result of Bol \cite{Bol} states that
\begin{equation} \label{Bol}
D^{k-1}[(cz+d)^{k-2}F(\gamma z)] = (cz+d)^{-k}(D^{k-1}F)(\gamma z)
\end{equation}
for any $\gamma = \sm a&b\\c&d\esm \in {\rm SL}_2 (\mathbb{R})$ and any function $F(z)$ with sufficiently many derivatives. From \eqref{Bol} we see that if $F(z)\in M^!_{2-k,\chi}(\Gamma)$, then $(D^{k-1}F)(z)\in M^!_{k,\chi}(\Gamma)$. Furthermore, if $f(z)\in M^!_{k,\chi}(\gamma)$ and $F(z)$ is any $(k-1)$-fold indefinite integral of $f(z)$, then $F(z)$ satisfies
\begin{equation} \label{period}
(F|_{2-k,\chi}\gamma)(z) = F(z) + p_\gamma(z),\ \gamma\in\Gamma,
\end{equation}
where $p_\gamma(z)$ is a polynomial in $z$ of degree at most $k-2$.

\begin{dfn} If $F(z)$ is  an Eichler integral of weight $2-k$ and character $\chi$ on $\Gamma$, then the functions $p_\gamma(z)$ occurring in (\ref{period})  are called the period functions of $F(z)$ $($or of $(D^{k-1}F)(z))$.
\end{dfn}

For example, we suppose $f(z)\in S_{k,\chi}(\Gamma)$ and define
\begin{equation} \label{Eichler1}
\mc{E}_f(z) := \frac{1}{c_k}\int_z^{i\infty} f(\tau)(z-\tau)^{k-2}d\tau
\end{equation}
and
\begin{equation} \label{Eichler2}
\mc{E}^N_f(z) := \frac{1}{c_k} \biggl[\int_z^{i\infty} f(\tau)(\bar{z}-\tau)^{k-2}d\tau\biggr]^-,
\end{equation}
where  $c_k := -\frac{(k-2)!}{(2\pi i)^{k-1}}$ and
 $[\ ]^-$ indicates the complex conjugate of the function inside $[\ ]$.
    Their period functions  can be written explicitly and they satisfy a certain relation.  Integrals of the type defined by (\ref{Eichler2}) were studied by W. Pribitkin, who called them ``auxiliary integrals'' and also noted their modular behavior (for example, see \cite{Pri}).

\begin{prop} \cite[Lemma 2.2]{KM} \label{computationperiod}
Let $\mc{E}_f(z)$ and $\mc{E}^N_f(z)$ be the Eichler integrals defined by \eqref{Eichler1} and \eqref{Eichler2}. Then
\begin{equation} \label{p1}
r(f,\gamma;z) := c_k(\mc{E}_f - \mc{E}_f|_{2-k,\chi}\gamma)(z) =  \int^{i\infty}_{\gamma^{-1}(i\infty)}f(\tau)(z-\tau)^{k-2}d\tau
\end{equation}
and
\begin{equation} \label{p2}
 r^N(f,\gamma;z) := c_k(\mc{E}^N_f-\mc{E}^N_f|_{2-k,\bar{\chi}}\gamma)(z) = \biggl[\int^{i\infty}_{\gamma^{-1}(i\infty)}f(\tau)(\bar{z}-\tau)^{k-2}d\tau\biggr]^-
\end{equation}
for all $\gamma\in\Gamma$. In particular, $r(f,\gamma;z) = [r^N(f,\gamma;\bar{z})]^-$ for all $\gamma\in\Gamma$.
\end{prop}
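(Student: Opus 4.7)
The plan is to prove both identities in parallel by substituting $\tau = \gamma\sigma$ in the integral defining $\mathcal{E}_f(\gamma z)$ (respectively $\mathcal{E}^N_f(\gamma z)$) and then carefully keeping track of the weight-$k$ transformation of $f$ together with the slash operator on the left-hand side. The deduction of the conjugation identity $r(f,\gamma;z) = [r^N(f,\gamma;\bar z)]^-$ at the end will be essentially cosmetic once the two integral formulas are in place.

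For (p1), I would compute $c_k\,\mathcal{E}_f(\gamma z)=\int_{\gamma z}^{i\infty}f(\tau)(\gamma z-\tau)^{k-2}\,d\tau$ by substituting $\tau=\gamma\sigma$. The substitution provides the three standard identities
\[
\gamma z-\gamma\sigma=\frac{z-\sigma}{(cz+d)(c\sigma+d)},\quad d(\gamma\sigma)=\frac{d\sigma}{(c\sigma+d)^{2}},\quad f(\gamma\sigma)=\chi(\gamma)(c\sigma+d)^{k}f(\sigma),
\]
the last being a restatement of $(f|_{k,\chi}\gamma)(\sigma)=f(\sigma)$. The powers of $c\sigma+d$ cancel exactly, leaving a clean expression of the form $\chi(\gamma)(cz+d)^{-(k-2)}\int_{z}^{\gamma^{-1}(i\infty)}f(\sigma)(z-\sigma)^{k-2}\,d\sigma$. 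Multiplying by $\bar\chi(\gamma)(cz+d)^{k-2}$ gives $c_k(\mathcal{E}_f|_{2-k,\chi}\gamma)(z)$ equal to the same integral with the factor $\chi(\gamma)$ absorbed, and subtracting from $c_k\,\mathcal{E}_f(z)$ splits the path at $z$ and collapses it to $\int_{\gamma^{-1}(i\infty)}^{i\infty}f(\sigma)(z-\sigma)^{k-2}\,d\sigma$, which is (p1).

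The calculation for (p2) is the same substitution, but now with $\overline{\gamma z}-\gamma\sigma=(\bar z-\sigma)/[(c\bar z+d)(c\sigma+d)]$, which is where the character and the slash weight shift from $\chi$ to $\bar\chi$. Taking the outer complex conjugate turns $(c\bar z+d)^{-(k-2)}$ into $(cz+d)^{-(k-2)}$ (here I use that $c,d\in\mathbb{R}$ because $\Gamma\subset\mathrm{SL}_{2}(\mathbb{R})$) and conjugates the factor $\chi(\gamma)$ to $\bar\chi(\gamma)$, so the identity $c_k(\hat{\mathcal{E}}_f|_{2-k,\bar\chi}\gamma)(z)=[\int_{z}^{\gamma^{-1}(i\infty)}f(\sigma)(\bar z-\sigma)^{k-2}\,d\sigma]^{-}$ drops out for the precise choice of $\bar\chi$ in the slash action. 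Subtracting from $c_k\,\hat{\mathcal{E}}_f(z)$ yields (p2). Finally, substituting $\bar z$ for $z$ in (p2) replaces $\bar z-\sigma$ by $z-\sigma$ inside the brackets, and applying the outer conjugation gives exactly (p1), which is the last assertion of the proposition.

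The main obstacle is purely bookkeeping: tracking the cancellations between $(cz+d)^{k-2}$ from the slash, $(c\sigma+d)^{k}$ from the modular transformation, $(c\sigma+d)^{-(k-2)}$ from the integrand, and $(c\sigma+d)^{-2}$ from the Jacobian, and then making sure that after complex conjugation in (p2) the character $\chi$ in the transformation law of $f$ pairs correctly with $\bar\chi$ in the slash operator. The only potentially subtle point is justifying the path deformation implicit in splitting $\int_{z}^{\gamma^{-1}(i\infty)}$ versus $\int_{z}^{i\infty}$, which is valid because $f$ is a cusp form and the integrand decays exponentially at every cusp so the integral over any path in $\mathbb{H}\cup\{\text{cusps}\}$ is path-independent.
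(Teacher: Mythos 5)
Your proposal is correct: the change of variables $\tau=\gamma\sigma$, the identity $\gamma z-\gamma\sigma=(z-\sigma)/[(cz+d)(c\sigma+d)]$, the Jacobian $(c\sigma+d)^{-2}$, and the modularity of $f$ cancel exactly as you describe, and splitting the path at $z$ (legitimate since $f$ is a cusp form, so the integrals converge at both cusps) yields \eqref{p1} and \eqref{p2}, with the conjugation identity following immediately. The paper itself does not prove this proposition but cites \cite[Lemma 2.2]{KM}; your argument is the standard one underlying that reference, including the correct observation that conjugation is what shifts the character from $\chi$ to $\bar{\chi}$ in \eqref{p2}.
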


\subsection{Supplementary function} \label{section3.2}
 Suppose that $k\in\ZZ, k>2$. Let $m$ be an integer and consider the Poincar\'e series
\begin{equation}\label{poin}
g_m(z,\chi) := \sum_{\gamma} \frac{e^{2\pi i(m+\kappa)\gamma z/\lambda}}{\chi(\gamma)(cz+d)^{k}},
\end{equation}
where $\gamma = \sm *&*\\c&d\esm$ runs through a complete set of elements of $\Gamma$ with distinct lower rows.
The following properties of the Poincar\'e series are well known:

\begin{thm} \label{poincare} \cite[pp. 272--289]{Leh} For the Poincar\'e series $g_m(z,\chi)$ defined by \eqref{poin}, we have the following.
\begin{enumerate}
\item[(1)] $g_m(z,\chi)\in M^!_{k,\chi}(\Gamma)$.
\item[(2)] $g_m(z,\chi)$ vanishes at all cusps of $\Gamma$ except possibly at $i\infty$. At $i\infty$ it has an expansion of the form
\[g_m(z,\chi) = 2e^{2\pi i(m+\kappa)z/\lambda}+2\sum_{n+\kappa>0}a_n(m,\chi)e^{2\pi i(n+\kappa)z/\lambda}.\]
Thus, if $m+\kappa>0$, then $g_m(z,\chi)\in S_{k,\chi}(\Gamma)$.
\item[(3)] There exist integers $0\leq m_1 < \ldots < m_s$ such that $g_{m_1}(z,\chi), \ldots, g_{m_s}(z,\chi)$ form a basis for $S_{k,\chi}(\Gamma)$.
\end{enumerate}
\end{thm}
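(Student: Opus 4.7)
The plan is to adapt the classical construction of Poincaré series on Fuchsian groups (as developed in Lehner's book) to the present $H$-group setting with character $\chi$ and weight $k>2$. First I would establish convergence. Since the sum is taken over a complete set of representatives of $\Gamma_\infty\backslash\Gamma$ with distinct lower rows, the majorant $\sum_{(c,d)}|cz+d|^{-k}$ over primitive pairs converges absolutely for $k>2$ by the standard Fuchsian group estimate, while $|e^{2\pi i(m+\kappa)\gamma z/\lambda}|$ is uniformly bounded on compact subsets of $\HH$ because $\mathrm{Im}(\gamma z)\to 0$ off a finite subfamily of cosets. The series thus defines a holomorphic function on $\HH$. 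The transformation law $(g_m|_{k,\chi}\gamma_0)(z)=g_m(z,\chi)$ follows because right multiplication by $\gamma_0\in\Gamma$ permutes the coset representatives and the cocycles for $\chi$ and $(cz+d)^{-k}$ combine compatibly via the lower-row entries of $\gamma\gamma_0$. Combined with the cusp analysis in (2), this yields $g_m\in M^!_{k,\chi}(\Gamma)$.

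For (2), I would compute the Fourier expansion of $g_m$ at each cusp. At $i\infty$ I split the sum by the bottom-row entry $c$: the cosets with $c=0$ collapse (via the $\Gamma_\infty$-action together with $-I\in\Gamma$) to the principal term $2e^{2\pi i(m+\kappa)z/\lambda}$, while the $c\neq 0$ contribution can be expanded, by the standard Bruhat-type integral argument, as a Fourier series $2\sum_{n+\kappa>0}a_n(m,\chi)e^{2\pi i(n+\kappa)z/\lambda}$ with coefficients given by Kloosterman-type sums weighted by Bessel functions. For a cusp $q_j\neq i\infty$, I would study $(g_m|_{k,\chi}A_j^{-1})(z)$, where $A_j$ scales the stabilizer of $q_j$ to a translation group, and observe that no coset representative $\gamma A_j^{-1}$ has lower row of the form $(0,*)$, so the Fourier expansion at $\infty$ of the transformed series has no principal or constant term, whence $g_m$ vanishes at $q_j$. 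When $m+\kappa>0$ the term $e^{2\pi i(m+\kappa)z/\lambda}$ also vanishes as $\mathrm{Im}(z)\to\infty$, so $g_m(z,\chi)\in S_{k,\chi}(\Gamma)$.

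For (3), I would invoke the Petersson unfolding identity, which gives
\[\langle f,g_m(\cdot,\chi)\rangle_{\mathrm{Pet}} = c_{k,m,\kappa,\lambda}\cdot b_m\]
for any $f(z)=\sum_{n+\kappa>0}b_n e^{2\pi i(n+\kappa)z/\lambda}\in S_{k,\chi}(\Gamma)$, with an explicit nonzero constant $c_{k,m,\kappa,\lambda}$. Non-degeneracy of the Petersson inner product on $S_{k,\chi}(\Gamma)$ then forces the family $\{g_m(z,\chi):m+\kappa>0\}$ to span $S_{k,\chi}(\Gamma)$, from which a basis $g_{m_1}(z,\chi),\ldots,g_{m_s}(z,\chi)$ with $0\leq m_1<\cdots<m_s$ can be selected by the standard pivot argument applied to initial Fourier coefficients.

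The main obstacle I expect is the careful bookkeeping at the cusps $q_j\neq i\infty$ for a general $H$-group carrying a nontrivial multiplier $\chi$: one must correctly track the width $\lambda_j$ and the cusp parameter $\kappa_j$, and verify the compatible interaction of $\chi$ with the scaling matrix $A_j$, in order to conclude vanishing of the principal part rather than merely of the polar part. This is precisely where a careless argument tends to drop constant terms, and the supplementary function framework recalled in this section is designed to control exactly these contributions in the applications that follow.
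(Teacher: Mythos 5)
Your proposal is correct and follows essentially the same route as the source the paper cites for this result: the paper gives no proof of its own here, quoting Theorem \ref{poincare} directly from Lehner \cite{Leh}, where exactly this argument (absolute convergence of the majorant $\sum|cz+d|^{-k}$ for $k>2$, the $c=0$ versus $c\neq0$ splitting of the Fourier expansion at each cusp, and Petersson's completeness theorem via the unfolding identity for part (3)) is carried out. The one step worth stating explicitly in your sketch is that the $c\neq0$ portion tends to $0$ uniformly in $x$ as $y\to\infty$, which is what rules out a constant term at $i\infty$ and gives genuine vanishing (not mere boundedness) at the cusps $q_j$.
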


 Now we recall the theory of supplementary functions (for details see \cite{Kon, KL}).
For $f(z)\in S_{k,\chi}(\Gamma)$,  by (3) of Theorem \ref{poincare}, there exist complex numbers $b_1,\ldots, b_s$ such that $f(z) = \sum_{i=1}^s b_ig_{m_i}(z,\chi)$. Put $f^*(z) = \sum_{i=1}^s\overline{b_i}g_{m_i'}(z,\bar{\chi})$, where
\begin{equation*}
m' =
\begin{cases}
-m, & \text{if $\kappa =0$},\\
-1 - m, & \text{if $\kappa>0$}.
\end{cases}
\end{equation*}
 Recalling that $\chi(T) = e^{2\pi i\kappa},\ 0\leq\kappa<1$,  we see that $\bar{\chi}(T) = e^{2\pi i\kappa'},\ 0\leq \kappa' <1$, where
\begin{equation} \label{kappa}
\kappa' =
\begin{cases}
0, & \text{if $\kappa = 0$},\\
1-\kappa, & \text{if $\kappa>0$}.
\end{cases}
\end{equation}
Thus we have the expansion at $i\infty$
\begin{align*}
g_{m_i'}(z,\bar{\chi}) &= 2e^{2\pi i(m_i'+\kappa')z/\lambda}+2\sum_{n+\kappa'>0}a_n(m_i',\bar{\chi})e^{2\pi i(n+\kappa')z/\lambda}\\
&= 2e^{-2\pi i(m_i+\kappa)z/\lambda} + 2\sum_{n+\kappa'>0}a_n(m_i',\bar{\chi})e^{2\pi i(n+\kappa')z/\lambda}.
\end{align*}
It follows that $f^*(z)\in M^!_{k,\bar{\chi}}(\Gamma)$, $f^*(z)$ has a pole at $i\infty$ with the principal part
\[2\sum_{i=1}^s \overline{b_i}e^{-2\pi i(m_i+\kappa)z/\lambda}\]
and $f^*(z)$ vanishes at all of the other cusps of $\Gamma$. We call $f^*(z)$ the {\it{supplementary\ function to}} $f(z)$.  Note that the supplementary function $f^*(z)$ is not unique since it depends on the representation of $f(z)$ as a sum of Poincar\'e series and there are relations between the Poincar\'e series.

A form $f(z)\in M^!_{k,\chi}(\Gamma)$ is a {\it{weakly holomorphic cusp form}} if its constant term vanishes at every cusp of $\Gamma$. Let $S_{k,\chi}^!(\Gamma)$ denote the space of weakly holomorphic cusp forms.
 Forms of the type $S_{k,\chi}^!(\Gamma)$ were studied earlier by W. Pribitkin, who called them ``constant-free modular forms'' and also examined certain integrals associated to them (see \cite{Pri}).
If $f(z)=\sum_{n+\kappa>0}a_ne^{2\pi i(n+\kappa)\tau/\lambda}\in S_{k,\chi}(\Gamma)$, then its Eichler integral is
\[\mc{E}_f(z) = \sum_{n+\kappa>0} a_n\biggl(\frac{n+\kappa}{\lambda}\biggr)^{-(k-1)}e^{2\pi i(n+\kappa)\tau/\lambda}.\]
Now we extend this definition to $f(z) = \sum_{n+\kappa\gg-\infty}a_ne^{2\pi i(n+\kappa)\tau/\lambda}\in S^!_{k,\chi}(\Gamma)$ as
\[\mc{E}_f(z) = \sum_{n+\kappa\gg-\infty} a_n\biggl(\frac{n+\kappa}{\lambda}\biggr)^{-(k-1)}e^{2\pi i(n+\kappa)\tau/\lambda}.\]

In \cite{Leh0}, using the circle method, J. Lehner showed that the Fourier coefficients of modular forms of negative weight are completely determined by the principal parts of the expansions of those forms at the cusps. Hence, using the information about the principal part of $\mc{E}_f(z)$, we can define the constant term $c_f$ associated with $\mc{E}_f(z)$. For example, if we assume that $f(z)$ has a pole at $i\infty$ and that it is holomorphic at all other cusps, then $c_f$ is equal to
\begin{equation} \label{constantformula}
c_f := \frac{1}{\lambda(k-1)!}\sum_{l<0}\sum_{\gamma=\sm a&b\\c&d\esm\in C^+}a_l \biggl(\frac{-2\pi i}{c}\biggr)^k\chi^{-1}(\gamma)e^{\frac{2\pi i}{c\lambda}la},
\end{equation}
where
\[C^+ := \{\sm a&b\\c&d\esm\in\Gamma|\ c>0,\ 0\leq -d,\ a < c\lambda\}.\]
 Note that $c_f$ is a sum of Kloosterman sums, and this Kloosterman sum is essentially the constant coefficient of a Poincar\'e series for each $l$.
With this constant term $c_f$, we define another Eichler integral and period functions of $f(z)$ as follows:
\[\mc{E}^H_f(z) := \mc{E}_f(z) + \delta_{\kappa,0}c_f\]
and
\[r^H(f,\gamma;z)  := c_k(\mc{E}^H_f - \mc{E}^H_f|_{2-k,\chi}\gamma)(z).\]
 In addition, we define $c_f = 0$ if $f(z)$ is a cusp form. Therefore, we see that $r(f,\gamma;z) = r^H(f,\gamma;z)$ for a cusp form $f(z)$.

The following proposition describes the properties of $\mc{E}^{H}$ and $r^{H}$.

\begin{thm} \cite[Section 2]{Hus}, \cite[Theorem 1]{Leh0} \label{suppleperiod} Suppose that $k>2$ is an integer.
Then we have the following.
\begin{enumerate}
\item[(1)] If $f(z)\in S_{k,\bar{\chi}}(\Gamma)$, then $r(f,\gamma;z) = [r^H(f^*,\gamma;\bar{z})]^-$ for all $\gamma\in\Gamma$.
\item[(2)] If $f(z)\in M^!_{2-k,\chi}(\Gamma)$, then $\mc{E}^H_{D^{k-1}f}(z) = f(z)$.
\end{enumerate}
\end{thm}

 Actually, part (1) follows quite readily from a result proved by M. I. Knopp in \cite{Kon}, which appeared in 1962. One can also see the paper of M. I. Knopp and J. Lehner \cite{KL} for this result.

\bigskip

\section{Proofs of  the main theorems} \label{section4}

In this section, we prove  the main theorems. First, we prove Theorem \ref{main1} via the supplementary functions associated to cusp forms.

\begin{proof} [\bf Proof of Theorem \ref{main1}]
Suppose that $\kappa=0$. 
 Let $a\in\CC$. Then $a$ is periodic and $D^{k-1}a = 0$. Therefore, $a$ is the Eichler integral in $E_{2-k,\chi}(\Gamma)$. In this case, the corresponding $g^*(z)$ and $G(z)$ in (\ref{dfnofeichler}) are zero functions.
Therefore we can see that $\CC\subset E_
{2-k,\chi}(\Gamma)$.

 For $\mc{F}^+(z)\in H^+_{2-k,\chi}(\Gamma)$, by the definition of $H^+_{2-k,\chi}(\Gamma)$, there is a non-holomorphic function $\mc{F}^-(z)$ such that
\[\mc{F}(z):= \mc{F}^+(z) + \mc{F}^-(z)\in H^*_{2-k,\chi}(\Gamma).\]
Then $h(z) := (\xi_{2-k}\mc{F}^-)(z)\in S_{k,\bar{\chi}}(\Gamma)$. Let $h^*(z)$ be its supplementary function. We define
\[\mc{H}^+(z) := \mc{E}^H_{h^*}(z),\ \mc{H}^-(z) := -\mc{E}^N_{h}(z).\]
By Proposition \ref{computationperiod} and  Theorem \ref{suppleperiod}, we find that
\[ r^H(h^*,\gamma;z) = [r^H(h,\gamma;\bar{z})]^-  =  r^N(h,\gamma;z),\]
which implies that $\mc{H}(z) =\mc{H}^+(z) + \mc{H}^-(z)$ is invariant under the slash  operator $|_{2-k,\chi}\gamma$ for all $\gamma\in\Gamma$. We can also check that $\Delta_{2-k}(\mc{H}) = 0$. Therefore, $\mc{H}(z) \in H^*_{2-k,\chi}(\Gamma)$. Note that
\[(\xi_{2-k}\mc{H}^-)(z) = \frac{(-4\pi)^{k-1}}{(k-2)!}h(z)\]
and hence $\xi_{2-k}\(\mc{F}-\frac{(k-2)!}{(-4\pi)^{k-1}}\mc{H}\)(z) =0$. Therefore, we have arrived at
\[G(z) := \biggl(\mc{F}-\frac{(k-2)!}{(-4\pi)^{k-1}}\mc{H}\biggr)(z) \in M^!_{2-k,\chi}(\Gamma)\]
by Theorem \ref{operator}.  
 By the definition of the supplementary function, the constant term of $h^*(z)$ is zero, and hence we see that
$D^{k-1}(\mc{E}^H_{h^*})(z) = h^*(z)$. This implies that
\[D^{k-1}(\mc{F}^+) = D^{k-1}(\mc{F}) = \frac{(k-2)!}{(-4\pi)^{k-1}}h^*(z) + (D^{k-1}G)(z).\]
  Since the function $\frac{(k-2)!}{(-4\pi)^{k-1}}h^*(z)$ is a supplementary function to a cusp form $\frac{(k-2)!}{(-4\pi)^{k-1}}h(z)\in S_{k,\bar{\chi}}(\Gamma)$, we can conclude that $H^+_{2-k,\chi}(\Gamma) \subset E_{2-k,\chi}(\Gamma)$.

Conversely suppose that $F(z)\in E_{2-k,\chi}(\Gamma)$. Then by the definition of $E_{2-k,\chi}(\Gamma)$, we can decompose $(D^{k-1}F)(z)$ as follows:
\[(D^{k-1}F)(z) = g^*(z) + (D^{k-1}G)(z),\]
where $g^*(z)$ is a supplementary function to a cusp form $g(z)\in S_{k,\bar{\chi}}(\Gamma)$ and $G(z) \in M^!_{2-k,\chi}(\Gamma)$. If we define $\mc{H}^{+}(z)$ and $\mc{H}^{-}(z)$ as
\begin{equation} \label{constructionharmonic}
\mc{H}^+(z) := \mc{E}^H_{g^*}(z),\  \mc{H}^-(z) := -\mc{E}^N_g(z), 
\end{equation}
then, we already checked that $\mc{H}^+(z) + \mc{H}^-(z)\in H^*_{2-k,\chi}(\Gamma)$. Since $G(z)\in M^!_{2-k,\chi}(\Gamma) \subset H^*_{2-k,\chi}(\Gamma)$, we see that
\[\mc{H}(z):= \mc{H}^+(z)+\mc{H}^-(z) + G(z)\in H^*_{2-k,\chi}(\Gamma).\]
Moreover, since the holomorphic part of $\mc{H}(z)$ is $\mc{H}^+(z) + G(z)$ and
\[D^{k-1}(\mc{H}^+ + G)(z) = g^*(z) + (D^{k-1}G)(z) = D^{k-1}(F)(z),\]
we deduce that $F(z) = (\mc{H}^+ + G)(z) + c$ for some  constant $c\in\CC$.  
We used  the fact that if a polynomial is periodic then it must be a constant.

Now suppose that $\kappa\neq0$. Note that in this case there is no constant term in the Fourier expansion.
By an argument similar to that in the case of $\kappa =0$, we can check that
\[H^+_{2-k,\chi}(\Gamma) = E_{2-k,\chi}(\Gamma),\]
which completes the proof.
\end{proof}

Theorem \ref{main1} plays a key role in the proof of Theorems \ref{main2} and \ref{main3}.

\begin{proof} [\bf Proof of Theorem \ref{main2}]

Let $\mc{F}(z) = \mc{F}^+(z) + \mc{F}^-(z)\in H^*_{2-k,\chi}(\Gamma)$. By Theorem \ref{main1}, $\mc{F}^+(z)\in E_{2-k,\chi}(\Gamma)$, and this implies that
\[(D^{k-1}\mc{F}^+)(z) = g^*(z) + (D^{k-1}G)(z),\]
where $g^*(z)$ is a supplementary function to a cusp form $g(z)\in S_{k,\bar{\chi}}(\Gamma)$ and $G(z)\in M^!_{2-k,\chi}(\Gamma)$. As we saw in (\ref{constructionharmonic}), we have 
\[\mc{F}^-(z) = -\mc{E}^N_g(z).\]
Therefore, we deduce that
\begin{align*}
\mb{P}(\mc{F}^+,\gamma;z) &= \frac{(4\pi)^{k-1}}{(k-2)!}(\mc{F}^+-\mc{F}^+|_{2-k,\chi}\gamma)(z)\\
&=-\frac{(4\pi)^{k-1}}{(k-2)!}(\mc{F}^--\mc{F}^-|_{2-k,\chi}\gamma)(z)\\
&= \frac{(4\pi)^{k-1}}{(k-2)!}(\mc{E}^N_g-\mc{E}^N_g|_{2-k,\chi}\gamma)(z)\\
&= \frac1{c_k}\frac{(4\pi)^{k-1}}{(k-2)!}r^N(g,\gamma;z) = \frac1{c_k}\frac{(4\pi)^{k-1}}{(k-2)!}[r(g,\gamma;\bar{z})]^-.
\end{align*}
 The last equality follows from Proposition \ref{computationperiod}.
 Note that $f(z)\in S_{k,\bar{\chi}}(\Gamma)$ is defined by $f(z) = \xi_{2-k}(\mc{F})$ in the statement of Theorem \ref{main2}.
Since $\xi_{2-k}(\mc{F}^-) = \frac{(-4\pi)^{k-1}}{(k-2)!}g(z) = f(z)$, we find that
\[r(g,\gamma;z) = \frac{(k-2)!}{(-4\pi)^{k-1}}r(f,\gamma;z),\]
and hence we have 
\begin{align*}
[\mb{P}(\mc{F}^+,\gamma;\bar{z})]^- &= \frac{1}{\overline{c_k}}\frac{(4\pi)^{k-1}}{(k-2)!}r(g,\gamma;z)= \frac{(-1)^{k-1}}{\overline{c_k}}r(f,\gamma;z)= \frac{1}{c_k}r(f,\gamma;z),
\end{align*}
 where $c_k := -\frac{(k-2)!}{(2\pi i)^{k-1}}$.

 In the case of $\gamma = \gamma_{c,d}$, by Proposition \ref{computationperiod} we have 
\begin{align*}
[\mb{P}(\mc{F}^+,\gamma_{c,d};\bar{z})]^- =& \frac{1}{c_k}r(f,\gamma_{c,d};z)= \frac{1}{c_k}\int^{i\infty}_{-\frac dc} f(\tau)(z-\tau)^{k-2}d\tau.
\end{align*}
From this we see that
\begin{align*}
[\mb{P}(\mc{F}^+,\gamma_{c,d};\bar{z})]^- =& \frac{1}{c_k}\sum_{j=0}^{k-2}i^{-j+1}\mat k-2\\j\emat\biggl(\int^\infty_0 f\biggl(it-\frac dc\biggr)t^jdt\biggr)\biggl(z+\frac dc\biggr)^{k-2-j}\\
=& \sum_{j=0}^{k-2}\frac{L(f,\zeta_{c\lambda}^{-d},j+1)}{(k-2-j)!}\biggl(2\pi i\biggl(z+\frac dc\biggr)\biggr)^{k-2-j}.
\end{align*}
Therefore, by the change of variable $z\mapsto z-\frac dc$, we get the desired result
\[\biggl[\mb{P}\left(\mc{F}^+,\gamma_{c,d};\bar{z}-\frac dc\right)\biggr]^- = \sum_{j=0}^{k-2}\frac{L(f,\zeta_{c\lambda}^{-d},j+1)}{(k-2-j)!}(2\pi iz)^{k-2-j}.\]
This completes the proof.
\end{proof}

To prove Theorem \ref{main3} we need following lemmas.  For the first lemma, let $Id$ be the trivial character of $\SL_2(\ZZ)$. 
Then the constant coefficients of the forms in $M^!_{2,Id}(\SL_2(\ZZ))$ are always zero since $M^!_{2, Id}(\SL_2(\ZZ))$ is equal to the set of derivatives of  weakly holomorphic modular forms with weight $0$. 
Therefore, by the result of Borcherds in \cite{Bor} we have the following lemma.

\begin{lem}\cite[Theorem 3.1]{Bor} \label{existence} Let $F(z) = \sum_{n\gg -\infty}a_ne^{2\pi inz}$. Then $F(z)\in M^!_{2,Id}(\SL_2(\ZZ))$ if and only if
\[ \sum_{n\in\ZZ\atop n\neq 0} a_n b_{-n} = 0,\]
for all $\sum_{n\gg-\infty}b_ne^{2\pi inz}\in M^!_{0,Id}(\SL_2(\ZZ))$.
\end{lem}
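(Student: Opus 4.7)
The plan is to prove this standard duality via the residue theorem on $X(1) = \SL_2(\ZZ)\backslash \HH^*$ together with the existence of weakly holomorphic modular forms with prescribed principal parts.

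For the forward implication, assume $F\in M^!_{2,Id}(\SL_2(\ZZ))$ and let $G(z) = \sum b_n e^{2\pi inz} \in M^!_{0,Id}(\SL_2(\ZZ))$. Then $F(z)G(z) \in M^!_{2,Id}(\SL_2(\ZZ))$, so $\omega := F(z)G(z)\,dz$ descends to a meromorphic differential on the compact Riemann surface $X(1)$, holomorphic everywhere except possibly at the unique cusp $q = 0$. Writing $dz = \frac{1}{2\pi i}\frac{dq}{q}$, the residue of $\omega$ at $q=0$ equals $\frac{1}{2\pi i}$ times the constant term of the Laurent expansion of $F(z)G(z)$ in $q$, which is $\frac{1}{2\pi i}\sum_{n} a_n b_{-n}$. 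The residue theorem on $X(1)$ forces this sum to vanish.

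For the backward implication, let $F(z) = \sum_{n\gg-\infty} a_n e^{2\pi inz}$ be any formal $q$-series satisfying the pairing condition. The first step is to build $F_0 \in M^!_{2,Id}(\SL_2(\ZZ))$ with the same principal part as $F$: using the ring $\CC[E_4,E_6,\Delta^{-1}]$, for each $n \geq 1$ one produces a form $h_n \in M^!_{2,Id}(\SL_2(\ZZ))$ whose principal part is exactly $q^{-n}$ (starting from $E_{14}/\Delta^n$ to create the pole and inductively subtracting previously constructed $h_m$, $m < n$, to clean up the lower-order negative terms), and sets $F_0 := \sum_{n < 0} a_n h_{-n}$. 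By the forward implication $F_0$ also satisfies the pairing, and hence so does $\widetilde{F} := F - F_0 = \sum_{n\geq 0} c_n e^{2\pi inz}$. Testing $\widetilde{F}$ against $1 \in M^!_{0,Id}(\SL_2(\ZZ))$ gives $c_0 = 0$, and testing against Faber-type forms $j_n(z) = q^{-n} + O(q) \in M^!_{0,Id}(\SL_2(\ZZ))$ for $n \geq 1$ (obtained from a monic degree-$n$ polynomial in $j(z)$ after subtracting a constant to kill the $q^0$-coefficient) yields $c_n = 0$ for each $n\geq 1$. Thus $\widetilde{F} = 0$ and $F = F_0 \in M^!_{2,Id}(\SL_2(\ZZ))$.

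The main obstacle is the clean construction of the test families $\{h_n\}_{n\geq 1}$ and $\{j_n\}_{n\geq 0}$ with precisely the principal parts needed for coefficient extraction. The existence of $j_n$ is classical (Faber polynomials in $j(z)$, corrected by a constant), while $h_n$ requires inductive cleanup from $E_{14}/\Delta^n$; the argument breaks if either family carries an unwanted lower-order term, so this bookkeeping, though routine, must be carried out with care.
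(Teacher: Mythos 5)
The paper does not actually prove this lemma; it is quoted verbatim from Borcherds (\cite[Theorem 3.1]{Bor}), so there is no in-paper argument to compare against. Your proof is the standard one for this duality --- the forward direction via the residue theorem applied to the invariant differential $F(z)G(z)\,dz$ on $X(1)$ (whose only possible residue is $\frac{1}{2\pi i}\sum_n a_nb_{-n}$ at the cusp), and the converse via forms with prescribed principal parts in weights $2$ and $0$ --- and the logical skeleton is sound: $F_0$ matches the principal part of $F$, the difference $\widetilde F$ is killed coefficient-by-coefficient by pairing against $1$ and the Faber-type forms $j_n$, and the elliptic points contribute no residues since $F G$ is holomorphic on $\HH$.

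The one concrete error is in the construction of the weight-$2$ family $\{h_n\}$: the form $E_{14}/\Delta^n$ has weight $14-12n$, which equals $2$ only for $n=1$, so for $n\geq 2$ your starting point is not in $M^!_{2,Id}(\SL_2(\ZZ))$ and subtracting the weight-$2$ forms $h_m$, $m<n$, cannot repair the weight. The fix is routine: take $(E_{14}/\Delta)\cdot P_{n-1}(j)$ for a monic degree-$(n-1)$ polynomial $P_{n-1}$ in $j$, which is a weight-$2$ weakly holomorphic form with leading term $q^{-n}$, and then run your inductive cleanup with the previously constructed $h_m$. (Note also that pairing any element of $M^!_{2,Id}(\SL_2(\ZZ))$ against $1$ forces its constant term to vanish, so the cleaned-up $h_n$ automatically has the shape $q^{-n}+O(q)$, which is what your coefficient extraction at the end uses.) With that correction the argument is complete.
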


\begin{lem} \label{existence2}  Let $k\in\ZZ, k>2$.
 If $\kappa=0,\ \lambda=1$ and  $\Gamma$ is a subgroup of finite index of the full modular group, then there are forms in $M^!_{2-k,\chi}(\Gamma)$ with non-zero constant terms.
\end{lem}

\begin{proof} [\bf Proof of Lemma \ref{existence2}]
By the Riemann-Roch theorem that there is a non-zero 
weakly holomorphic modular form $F(z)=\sum_{n\gg -\infty}a_ne^{2\pi inz}$  in $M^!_{2-k,\chi}(\Gamma)$. 
Suppose that there is no weakly holomorphic modular form in $M^!_{2-k,\chi}(\Gamma)$ 
with non-zero constant term. For every weakly holomorphic modular form $G(z) := \sum_{n\gg-\infty}b_ne^{2\pi inz}\in M^!_{0,Id}(\SL_2(\ZZ))$, we see that $F(z)G(z)\in M^!_{2-k,\chi}(\Gamma)$. By assumption, the constant term of $F(z)G(z)$
is zero, i.e.,
\begin{equation} \label{constantzero}
\sum_{n\in\ZZ} a_n b_{-n} = 0.
\end{equation}
On the other hand, we can consider $F(z)$ as a formal power series as in Lemma \ref{existence}. Then, since  (\ref{constantzero})  holds for every $\sum_{n\gg-\infty}b_ne^{2\pi inz}\in M^!_{0,Id}(\SL_2(\ZZ))$, by Lemma \ref{existence}, we see that $F(z)$ is a weakly holomorphic modular form in $M^!_{2,Id}(\SL_2(\ZZ))$.  However, $F(z)$ is already a weakly holomorphic modular form of weight $2-k$. Since $k>2$,  we see that $2$ and $2-k$ can not be the same. Therefore, $F(z)$ should be  identically zero and this is a contradiction by the assumption that $F(z)$ is non-zero.
In conclusion, there is a weakly holomorphic modular form $F(z)\in M^!_{2-k,\chi}(\Gamma)$ whose constant term is not zero.
\end{proof}

We are now ready to prove Theorem \ref{main3}.

\begin{proof} [\bf Proof of Theorem \ref{main3}]
Suppose that $\mc{F}(z) = \mc{F}^+(z) + \mc{F}^-(z)\in H^*_{2-k,\chi}(\Gamma)$. By Theorem \ref{main1},
\[(D^{k-1}\mc{F})(z) =
(D^{k-1}\mc{F}^+)(z) = g^*(z) + (D^{k-1}G)(z),\]
where $g^*(z)$ is a supplementary function to a cusp form $g(z)\in S_{k,\bar{\chi}}(\Gamma)$ and $G(z)\in M^!_{2-k,\chi}(\Gamma)$. We also observe that
\[\mc{F}^-(z) = -\mc{E}^N_g(z) \quad \text{and} \quad \xi_{2-k}(\mc{F})(z) = \xi_{2-k}(\mc{F}^-)(z) = \frac{(-4\pi)^{k-1}}{(k-2)!}g(z).\]
Therefore,  if we use Theorem \ref{suppleperiod}, then we find that
\[r(\xi_{2-k}\mc{F},\gamma;z) = \frac{(-4\pi)^{k-1}}{(k-2)!}r(g,\gamma;z) =  \frac{(-4\pi)^{k-1}}{(k-2)!}[r^H(g^*,\gamma;\bar{z})]^-\]
and
\[r^H(D^{k-1}(\mc{F}),\gamma;z) = r^H(g^*,\gamma;z) + r^H(D^{k-1}G,\gamma;z).\]
Since $G(z) \in M^!_{2-k,\chi}(\Gamma)$, we see that
$\mc{E}^H_{D^{k-1}(G)}(z) = G(z)\in M^!_{2-k,\chi}(\Gamma)$ by  Theorem \ref{suppleperiod}, and hence we have 
\[r^H(D^{k-1}G,\gamma;z) = c_k(\mc{E}^H_{D^{k-1}(G)} - \mc{E}^H_{D^{k-1}(G)}|_{2-k,\chi}\gamma)(z) =   0\]
for all $\gamma\in\Gamma$. Therefore, we arrive at
\[r^H(D^{k-1}(\mc{F}),\gamma;z) =  r^H(g^*,\gamma;z) =  \frac{(k-2)!}{(-4\pi)^{k-1}}[r(\xi_{2-k}\mc{F},\gamma;\bar{z})]^-.\]
 By an easy calculation based on the definition of $r^H$, we get
\begin{align*}
r^H(D^{k-1}(\mc{F}),\gamma;z)  =& r(D^{k-1}(\mc{F}),\gamma;z) +\delta_{\kappa,0} c_{D^{k-1}(\mc{F})}c_k\biggl(1-c^{k-2}\bar{\chi}(\gamma)\biggl(z+\frac dc\biggr)^{k-2}\biggr).
\end{align*}
Therefore, we see that the period $r(\xi_{2-k}(\mc{F}),\gamma;z)$ is the same as
\begin{align*}
\frac{(-4\pi)^{k-1}}{(k-2)!}[r^H(D^{k-1}(\mc{F}),\gamma;\bar{z})]^- =& \frac{(-4\pi)^{k-1}}{(k-2)!}\biggl\{[r(D^{k-1}(\mc{F}),\gamma;\bar{z})]^- \\
&+\delta_{\kappa,0}\overline{c_{D^{k-1}(\mc{F})}c_k}\biggl(1-c^{k-2}\chi(\gamma)\biggl(z+\frac dc\biggr)^{k-2}\biggr)\biggr\}.
\end{align*}
 This proves the first part of Theorem \ref{main3}.

In particular, if $\kappa=0,\ \lambda=1$ and $\Gamma$ is a  subgroup of finite index of $\SL_2(\ZZ)$, then by Lemma \ref{existence2}, we know that there is a weakly holomorphic modular form $H(z)\in M^!_{2-k,\chi}(\Gamma)$ with non-zero constant term $\alpha\in\CC$. Therefore,
\begin{align*}
r(D^{k-1}H,\gamma;z) &= r^H(D^{k-1}H,\gamma;z) -  \alpha c_k \biggl(1-c^{k-2}\bar{\chi}(\gamma)\biggl(z+\frac dc\biggr)^{k-2}\biggr)\\
&= -  \alpha c_k \biggl(1-c^{k-2}\bar{\chi}(\gamma)\biggl(z+\frac dc\biggr)^{k-2}\biggr),
\end{align*}
because $r^H(D^{k-1}H,\gamma;z)=0$ by {Theorem \ref{suppleperiod}}. Let $\hat{\mc{F}}(z) = \mc{F}(z) -   \frac {c_{D^{k-1}(\mc{F})}}\alpha H(z)$. Then $\xi_{2-k}(\mc{F})(z) = \xi_{2-k}(\hat{\mc{F}})(z)$, and  if we use the first part of Theorem \ref{main3}, then we have 
\begin{align*}
r(\xi_{2-k}(\hat{\mc{F}}),\gamma;z) =& r(\xi_{2-k}(\mc{F}),\gamma;z)\\
=& \frac{(-4\pi)^{k-1}}{(k-2)!}\biggl\{[r(D^{k-1}(\mc{F}),\gamma;\bar{z})]^- +\overline{c_{D^{k-1}(\mc{F})}c_k}\biggl(1-c^{k-2}\chi(\gamma)\biggl(z+\frac dc\biggr)^{k-2}\biggr)\biggr\}\\
=& \frac{(-4\pi)^{k-1}}{(k-2)!}[r(D^{k-1}(\hat{\mc{F}}),\gamma;\bar{z})]^-.
\end{align*}
The last equality comes from the computation
\begin{align*}
[r(D^{k-1}(\mc{F}),\gamma;\bar{z})]^- =& [r(D^{k-1}(\hat{\mc{F}}),\gamma;\bar{z})]^- +\frac{\overline{c_{D^{k-1}(\mc{F})}}}{\alpha}[ r(D^{k-1}H,\gamma;\bar{z})]^-\\
=& [r(D^{k-1}(\hat{\mc{F}}),\gamma;\bar{z})]^- - \overline{c_{D^{k-1}(\mc{F})}c_k}\biggl(1-c^{k-2}\chi(\gamma)\biggl(z+\frac dc\biggr)^{k-2}\biggr).
\end{align*}
This completes the proof.
\end{proof}


\subsection*{Acknowledgment}
 The authors  thank Bruce Berndt and Ken Ono for valuable comments on
an earlier version of this paper. The authors are also grateful to
anonymous referees for their careful readings and numerous suggestions
which improved the exposition of this paper a lot.

\end{document}